\renewcommand{\to}{\rightarrow}
\newcommand{\pa}{\partial}
\newcommand{\ino}{\int_{\Omega}}
\renewcommand{\dfrac}{\displaystyle\frac}
\newcommand{\intbar}{\mathop{\int\makebox(-15.5,0){\rule[6pt]{.7em}{0.3pt}}\kern-6pt}\nolimits}
\newcommand{\ii}{\infty}
\newcommand{\dt}{\delta}
\newcommand{\al}{\alpha}
\newcommand{\sg}{\sigma}
\newcommand{\om}{\Omega}
\newcommand{\graf}[1]{\left\{\begin{array}{ll}#1\end{array}\right.}
\renewcommand{\a }{\alpha }
\newcommand{\D }{\Delta }
\newcommand{\lm }{\lambda }
\renewcommand{\O }{\Omega }
\def\p{\partial}
\newcommand{\be}{\begin{equation}}
\newcommand{\ee}{\end{equation}}
\newcommand{\beq}{\begin{equation}}
\newcommand{\eeq}{\end{equation}}
\newcommand{\R}{\mathbb{R}}
\newcommand{\N}{\mathbb{N}}
\newcommand{\dis}{\displaystyle}
\newcommand{\bns}{\alpha_{n}}
\newtheorem{theorem}{Theorem}[section]
\newtheorem{proposition}[theorem]{Proposition}
\newtheorem{definition}{Definition}[section]
\newtheorem{corollary}[theorem]{Corollary}
\newtheorem{remark}[theorem]{Remark}
\newtheorem{example}[theorem]{Example}
\newtheorem{lemma}[theorem]{Lemma}
\newcommand{\bpr}{\begin{proposition}}
\newcommand{\epr}{\end{proposition}}
\newcommand{\bex}{\begin{example}\rm}
\newcommand{\eex}{\end{example}}
\newcommand{\brm}{\begin{remark}\rm}
\newcommand{\erm}{\end{remark}}
\newcommand{\bdf}{\begin{definition}\rm}
\newcommand{\edf}{\end{definition}}
\newcommand{\bte}{\begin{theorem}}
\newcommand{\ete}{\end{theorem}}
\newcommand{\ble}{\begin{lemma}}
\newcommand{\ele}{\end{lemma}}
\newcommand{\bco}{\begin{corollary}}
\newcommand{\eco}{\end{corollary}}
\newcommand{\mycomment}[1]{}
\numberwithin{equation}{section}
\begin{document}
\title[A Harnack-type inequality]
{A Harnack-type inequality for a perturbed singular Liouville equation. }

\author{D. Bartolucci, P. Cosentino, L. Wu}

\address{Daniele Bartolucci, Department of Mathematics, University of Rome {\it ``Tor Vergata"} \\  Via della ricerca scientifica n.1, 00133 Roma, Italy. }
\email{bartoluc@mat.uniroma2.it}

\address{Paolo Cosentino, Department of Mathematics, University of Rome {\it ``Tor Vergata"} \\  Via della ricerca scientifica n.1, 00133 Roma, Italy. }
\email{cosentino@mat.uniroma2.it}

\address{Lina Wu, School of Mathematics and Statistics, Beijing Jiaotong University, Beijing, 100044, China}
\email{lnwu@bjtu.edu.cn}

\begin{abstract}
Motivated by the Onsager statistical mechanics description of turbulent Euler flows with point singularities, we obtain a Harnack-type inequality for sequences of solutions of the following perturbed Liouville equation,
\begin{equation}\nonumber
 -\D v_n=\left({\epsilon_n^2+|x|^2}\right)^{\alpha_n}V_n(x)e^{\displaystyle v_n} \qquad\text{in} \,\,\, \Omega,
\end{equation}
where $\epsilon_n\to0^+$, $\a_n\to\a_\infty\in(-1,1)$, $\O$ is a bounded domain in $\mathbb{R}^2$ containing the origin and $V_n$ satisfies,
\begin{equation}\nonumber
 0<a\leq V_n\leq b<+\ii, \,\, V_n\in C^{0}(\O), \,\,V_n\to V \;\; \text{locally uniformly in}\,\,{\O}.
\end{equation}
\end{abstract}

\keywords{Liouville-type equations, Singular Mean Field, ``$\sup+C\inf$'' inequality}

\thanks{2020 \textit{Mathematics Subject classification:}  35J61, 35J75, 35R05, 35B45. }

\maketitle
\section{Introduction}

We are concerned with the analysis of solutions of
\begin{equation}\label{intro:eq}
 -\D v_n=\left({\epsilon_n^2+|x|^2}\right)^{\alpha_n}V_n(x)e^{\displaystyle v_n} \qquad\text{in} \,\,\, \Omega,
\end{equation}
where $\epsilon_n\to0^+$, as $n\to+\infty$,
\begin{equation}
 \label{intro:alphan}
 \a_n\to\a_\infty\in(-1,+\ii),
\end{equation}
$\O$ is a bounded domain in $\mathbb{R}^2$ that contains the origin $x=0\in \om$ and $V_n$ satisfies,
\begin{equation}\label{intro: Vn}
 0<a\leq V_n\leq b<+\ii, \quad V_n\in C^{0}(\O), \quad V_n\to V \,\, \text{locally uniformly in}\,\,{\O}.
\end{equation}
If $\epsilon_n\equiv 0$ then \eqref{intro:eq} is just the classical Liouville equation (\cite{Lio}, \cite{Pic}), which has a long history in mathematics, see
for example \cite{bwz1}, \cite{clmp2}, \cite{kw}, \cite{lin-Lwang}, \cite{tar-sd}, \cite{yang} and references quoted therein.\\
On the other hand, equations of the form \eqref{intro:eq} have been discussed in \cite{det}, \cite{llty}, \cite{os}, under the condition
$\ino \left({\epsilon_n^2+|x|^2}\right)^{\alpha_n}V_n(x)e^{\displaystyle v_n} \leq C$, showing in general the
subtle phenomenon of ``blow up without concentration", where solutions blow up and, unlike the cases $\alpha_n \equiv 0$ (\cite{bm}) or either
$\epsilon_n\equiv 0$ (\cite{bt}), the right hand side of \eqref{intro:eq} does not converge to a sum of Dirac masses, the weak-$*$ limit being free instead
to develop a non vanishing absolutely continuous $L^1$ component.\\
Motivated by the analysis of the Onsager mean field statistical description of turbulent flows, in \cite{bcwyzOnsager} we recently considered \eqref{intro:eq},
where the weight $\left({\epsilon_n^2+|x|^2}\right)^{\alpha_n}$ encodes the presence of a fixed co-rotating ($\al_n<0$) or counter-rotating $(\al_n>0)$
vortex at the origin. In this context (see Lemma 3.2 in \cite{bcwyzOnsager}) a delicate point arise in the analysis of \eqref{intro:eq} which requires
an estimate of ``sup+C inf" type, in the same spirit of analogous results about the cases $\alpha_n \equiv 0$ (\cite{bls},\cite{CL},  \cite{S}, \cite{yy}) or
$\epsilon_n \equiv 0$ (\cite{b0}, \cite{b1}, \cite{b2}, \cite{b8}, \cite{BCLT}, \cite{cosentino2025}, \cite{Tar3}, \cite{T1}). Remark that
the examples of blow up without concentration in \cite{llty} are built upon the assumption $\al_n\to \al>1$. Also, blow up implies concentration as far as $\al_n\leq 1$ (see either Theorem 2.2 in \cite{bcwyzOnsager} or the general results in \cite{os}). 
Thus, by analogy with the ``regular" case $\al_n=0$ (\cite{S}), one may wonder about the validity of a ``sup+C inf" Harnack-type inequality as far as $\al_n\leq 1$. We point out that this class of ``sup+C inf" inequalities are the two-dimensional singular analogue of the classical ``sup x inf" inequalities first established in dimension $N\geq3$ in the context of the Yamabe problem, see \cite{lz}, \cite{Schoen} and references quoted therein. \\ \\ 
Let us recall some well known facts about this problem. 
It was first conjectured in \cite{bm} that if $v_n$ is a sequence of solution of \eqref{intro:eq} with $\alpha_n\equiv0$ and such that
\[
 0<a\leq V_n\leq b<+\infty,
\]
then, for any compact set $K\Subset\O$ there exists a constant $C_1\geq 1$, which depends only by $a$ and $b$ and a positive constant $C_2$,
which depends also by the distance dist$(K,\partial \O)$, such that,
\begin{equation}\label{sup+inf.intro}
 \underset{K}\sup \,v_n + C_1 \underset{\O}\inf\, v_n\leq C_2.
\end{equation}
Indeed \eqref{sup+inf.intro} was first proved in \cite{S}. Actually, by Remark 1.3 in \cite{b1}, 
we know that if $V_n$ also satisfies \eqref{intro: Vn}, then \eqref{sup+inf.intro} holds for any $C_1>1$, for a suitable $C_2$. Assuming that,
\begin{equation}\label{intro:KLip}
 ||\nabla V_n||_\infty\leq C_3,
\end{equation}
it has been proved in \cite{bls} that \eqref{sup+inf.intro} holds with $C_1=1$ and $C_2$ depending by $C_3$ as well.
Finally it has been proved in \cite{CL} that \eqref{sup+inf.intro} holds with $C_1=1$ under even weaker assumptions on $V_n$, such as
the existence of a logarithmic uniform modulus of continuity for $V_n$. Remark that in \cite{CL} one also finds that $C_1=\sqrt{\frac{b}{a}}$ is
the sharp constant for $V_n$ just satisfying $ 0<a\leq V_n\leq b<+\infty$.\\
Concerning the case $\al_\ii\neq 0$ and $\epsilon_n\equiv0$ (i.e. the so called conical singularities, see for example \cite{b8} for some details about the underlying 
geometrical problem),
these results were extended to cover the case $\alpha_\infty\in(-1,0)$ in \cite{b0}, \cite{b2} and more recently in \cite{cosentino2025} (see also \cite{b1}).
On the other hand, the case of positive exponents ($\alpha_\infty>0$ and $\epsilon_n\equiv0$) is more
delicate. It has been proved in \cite{T1} (see also \cite{tar-sd}) that  a weaker but still sharp inequality holds true: for any $\alpha_\infty>0$,
there exists a constant $C>0$ such that
\begin{equation}\label{u0+inf}
     v_n(0)+\underset{\O}\inf\, v_n\leq C
\end{equation}
for any sequence of solutions to 
\eqref{intro:eq}, \eqref{intro:alphan} and \eqref{intro:KLip}. See 
\cite{b0}, \cite{b1}, \cite{b8} for other
partial results about this problem. In particular in \cite{BCLT} the sharp inequality \eqref{sup+inf.intro} was obtained with $C_1=1$ but under stronger assumptions, 
see Theorem 1.3 in \cite{BCLT}.\\
Motivated by the Onsager vortex model pursued in \cite{bcwyzOnsager}, we are interested in a generalization of the result in \cite{b1}
to solutions of \eqref{intro:eq}, satisfying \eqref{intro:alphan} and \eqref{intro: Vn}, with $\epsilon_n\to0^+$ and $\alpha_\infty\in (-1,1)$.
Remark that any inequality of the form \eqref{sup+inf.intro} implies that blow up implies concentration in the sense of \cite{bm} and \cite{bt}. Therefore,
in view of the examples of blow up without concentration in \cite{llty}, it cannot hold in general as far as for $\al_\ii>1$.
Here we prove the following,
\begin{theorem}
 \label{intro:maintheo}
Let $\O$ be an open bounded domain in $\R^2$ which contains the origin, $\{0\}\subset \O$. Assume that
$v_n$ is a sequence of solutions of \eqref{intro:eq} satisfying \eqref{intro:alphan} and \eqref{intro: Vn}  with
$$
\alpha_\infty\in (-1,1).
$$
Then, for any
 $$ C_1>\max\left\{1,\tfrac{1+\alpha_\infty}{1-\alpha_\infty}\right\}$$
and for any compact set $K\subset\O$, there exists a constant $C_2>0$, which depends only by $a,b,dist(K,\p\O), \alpha_\infty$ and by the
uniform modulus of continuity of $V$ on $K$, such that,
\begin{equation}
 \label{intro:sup+Cinf}
 \underset{K}\sup\, v_n + C_1\underset{\O}\inf\, v_n\leq C_2.
\end{equation}
\end{theorem}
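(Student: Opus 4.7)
The plan is to argue by contradiction, along the lines of the classical Brezis--Merle / Schoen / Chen--Lin / Shafrir blow-up scheme, suitably adapted to the perturbed singular weight $(\varepsilon_n^2+|x|^2)^{\alpha_n}$. Fix $C_1 > C_\ast := \max\left\{1, \tfrac{1+\alpha_\infty}{1-\alpha_\infty}\right\}$ and a compact $K\subset\Omega$, and suppose by contradiction that \eqref{intro:sup+Cinf} fails. Then there exist a sequence of solutions $v_n$ of \eqref{intro:eq}, points $x_n \in K$ with $v_n(x_n) = \sup_K v_n$, and $y_n \in \overline{\Omega}$ with $v_n(y_n) = \inf_\Omega v_n$, such that $v_n(x_n) + C_1 v_n(y_n) \to +\infty$; in particular $v_n(x_n) \to +\infty$ and, up to a subsequence, $x_n \to x_\infty \in K$. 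The first case, $x_\infty \neq 0$, is easy to dispose of: on a neighbourhood of $x_\infty$ the weight $(\varepsilon_n^2+|x|^2)^{\alpha_n}$ is uniformly bounded above and away from zero and converges uniformly to $|x|^{2\alpha_\infty}$, so \eqref{intro:eq} reduces locally to $-\Delta v_n = \widetilde V_n e^{v_n}$ with $\widetilde V_n$ enjoying the hypotheses of \cite{CL}. The sharp Chen--Lin inequality then gives $\sup_K v_n + \inf_\Omega v_n \leq C$, contradicting the above since $C_1 > 1$.

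For the delicate case $x_\infty = 0$, the idea is to perform a blow-up analysis whose rescaling is dictated by the position of $x_n$ relative to $\varepsilon_n$. Set $\sigma_n := |x_n|+\varepsilon_n \to 0$ and consider
\[
 w_n(y) := v_n(p_n + \sigma_n y) + 2(1+\alpha_n)\log\sigma_n,
\]
centered at $p_n = 0$ when $|x_n|/\varepsilon_n$ is bounded and at $p_n = x_n$ otherwise. A standard compactness argument shows that, up to subsequences, $w_n$ converges on compact subsets of $\R^2$ (possibly outside one singular point) to an entire finite-mass solution $w_\infty$ of one of the limit equations
\[
 -\Delta w = V(0)e^w, \quad -\Delta w = |y|^{2\alpha_\infty}V(0)e^w, \quad -\Delta w = (\kappa^2+|y - y_\ast|^2)^{\alpha_\infty}V(0)e^w,
\]
on $\R^2$, depending on whether $|x_n|/\varepsilon_n \to +\infty$ with $\sigma_n \gg \varepsilon_n$, $x_n \equiv 0$ with $\sigma_n \gg \varepsilon_n$, or $|x_n|/\varepsilon_n \to L \in [0,+\infty)$ with $\sigma_n \sim \varepsilon_n$, respectively. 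Classification of such entire solutions (via Chen--Li for the regular case, \cite{tar-sd} for the singular case, and a direct analysis along the lines of \cite{bcwyzOnsager} for the perturbed one) gives the asymptotic expansion $w_\infty(y) = -2\beta_\infty \log|y| + O(1)$ as $|y|\to\infty$, with $\beta_\infty$ determined by the total mass of the profile in each regime.

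Unfolding the rescaling and combining with a Green function representation for $v_n$ outside the concentration region then yields the quantitative bound
\[
 v_n(x_n) + C_\ast\, v_n(y_n) \leq C + o(v_n(x_n)),
\]
which, for $C_1 > C_\ast$, directly contradicts $v_n(x_n) + C_1 v_n(y_n) \to +\infty$ and closes the proof.

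The main obstacle is the intermediate regime $|x_n|/\varepsilon_n \to L \in [0,+\infty)$, where the limit equation is a genuine perturbation of the singular Liouville equation. The explicit radial profile $w(y) = \log\bigl(4(2+\alpha_\infty)/(1+|y|^2)^{2+\alpha_\infty}\bigr)$ (solving the symmetric version for $\kappa = 1$, $y_\ast = 0$) illustrates the type of decay one has to handle and shows how the constant $C_\ast$ ultimately enters the estimate. The uniform modulus of continuity of $V$ on $K$ is exploited \emph{à la} \cite{bls,CL,b1} to replace $V(0)$ by $V_n$ in the pointwise estimates without losing exponential factors in $v_n(x_n)$. Finally, the hypothesis $\alpha_\infty < 1$ is essential throughout to exclude the ``blow-up without concentration'' scenarios of \cite{llty} and to guarantee that a genuine bubble forms, consistently with Theorem 2.2 in \cite{bcwyzOnsager}.
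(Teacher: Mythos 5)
Your outline reproduces the general contradiction framework, but it omits the mechanism that actually makes the proof work, and at the crucial point it asserts rather than proves. First, Theorem \ref{intro:maintheo} assumes \emph{no} bound on $\int_\Omega(\epsilon_n^2+|x|^2)^{\alpha_n}V_ne^{v_n}$, so your ``standard compactness argument'' for the rescaled functions has no local mass bound to feed on: producing such a bound from the contradiction hypothesis is the core of the proof. The paper does this with the averaged function $\psi_n(r;\alpha_n)=v_n(y_n)+\tfrac{C_1}{2\pi r}\int_{\partial B_r(y_n)}v_n\,d\sigma+2(1+C_1)(1+\alpha_n)\log r$, whose monotonicity shows that the local mass on the maximal balls $B_{s_n}(y_n)$ (and $B_{\sigma_n}(y_n)$) stays below $4\pi(1+\alpha_n)\tfrac{C_1+1}{C_1}$ while blow-up persists at that scale, i.e. $v_n(y_n)+2(1+\alpha_n)\log s_n\to+\infty$. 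This is also exactly where the threshold on $C_1$ enters: $C_1>\max\{1,\tfrac{1+\alpha_\infty}{1-\alpha_\infty}\}$ is equivalent to $4\pi(1+\alpha_\infty)\tfrac{C_1+1}{C_1}<\min\{8\pi,8\pi(1+\alpha_\infty)\}$, and the contradiction comes from comparing this with the \emph{minimal} bubble mass supplied by the Minimal Mass Lemma and Lemma \ref{massstrange}. Your final claim that a Green-function representation ``yields $v_n(x_n)+C_\ast v_n(y_n)\le C+o(v_n(x_n))$'' is precisely this missing comparison, left unproven; moreover, as stated (with constant $C_\ast$ and an $o(v_n(x_n))$ error) it would not even immediately contradict the hypothesis $v_n(x_n)+C_1\inf_\Omega v_n\to+\infty$.

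Second, the rescaling you propose is at the wrong scale and invokes a classification that does not exist. Blow-up occurs at the scale $\delta_n$ with $\delta_n^{2(1+\alpha_n)}=e^{-v_n(x_n)}$, which is in general much smaller than $\sigma_n=|x_n|+\epsilon_n$; at scale $\sigma_n$ the rescaled functions are unbounded above and need not converge to any entire solution, so the asymptotic expansion $w_\infty=-2\beta_\infty\log|y|+O(1)$ is not available without the finer case analysis (the paper's Cases (I)--(III), comparing $\epsilon_n$, $|y_n|$ and $s_n$, plus the degenerate case handled by the Minimal Mass Lemma). Furthermore, entire solutions of $-\Delta w=(\kappa^2+|y-y_\ast|^2)^{\alpha_\infty}V(0)e^w$ with $\kappa>0$ are \emph{not} classified and their mass is not quantized: by Lemma \ref{massstrange} (and Section \ref{sec4} of the paper) the mass only obeys two-sided bounds and can take any value in an interval, so ``$\beta_\infty$ determined by the total mass of the profile in each regime'' is not a meaningful step. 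Only the lower mass bound is available, and it is exactly that lower bound, set against $4\pi(1+\alpha_n)\tfrac{C_1+1}{C_1}$ on the carefully chosen balls $B_{s_n}(y_n)$, that closes the argument in the paper. As it stands, your proposal has a genuine gap at the decisive step.
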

Interestingly enough, in the more subtle case $\al_\ii\in (0,1)$, the refined profiles obtained in \cite{bcwyzOnsager} suggest that \eqref{intro:sup+Cinf} is almost sharp, see section \ref{sec4} for further details. In particular those profiles provide examples 
of sequences of solutions of \eqref{intro:eq}, \eqref{intro:alphan}, \eqref{intro: Vn} that, for any fixed $C_1<\tfrac{1+\alpha_\infty}{1-\alpha_\infty}$, satisfy 
$$
\sup\limits_{K} v_n+C_1\inf\limits_{\om}v_n\to +\ii.
$$
Therefore, we see that, even in the case $\al_\ii\in (0,1)$, if the homogeneous classical expression of a conical singularity at the origin as $|x|^{2\al}$, is replaced  by some uniform approximating sequence as $(\epsilon_n^2+|x|^2)^{\al_n}$ in \eqref{intro:eq}, there is no chance to come up with an inequality neither of the form \eqref{u0+inf}. It is an interesting open problem to check whether or not, possibly under stronger assumptions as in \cite{BCLT}, the sharp constant in \eqref{intro:sup+Cinf} is $C_1=\tfrac{1+\alpha_\infty}{1-\alpha_\infty}$. We observe that a similar open problem was formulated in \cite{cosentino2025} in the case $\alpha_\ii\in(-1,0)$ and $\epsilon_n\equiv0$, under weaker assumptions on $V_n$.

\bigskip 

Concerning the proof, we argue as in \cite{b1} (see also \cite{S}) via known blow-up arguments (\cite{bt}, \cite{det}, \cite{ls}, \cite{llty}), where one has
to carefully handle the various possibilities arising due to the perturbation (i.e. $\epsilon_n\to 0^+$) of the homogeneous conical singularity.
The contradiction argument requires, in each one of the  blow up scenarios at hand, to find at least one bubble coming with some large enough amount of mass.
This is also why we need a preliminary result, which is a ``minimal mass" Lemma for solutions of \eqref{intro:eq}, see Section \ref{sec2}.

\bigskip

This paper is organized as follows: in Section 2 we give a proof of the minimal mass Lemma, while the proof of the ``$\sup+C\inf$''
inequality is done in Section 3. We discuss the sharpness of the ``$\sup+C\inf$'' inequality in section \ref{sec4}. Lastly, in the appendix, we state some well-known properties of the solutions of some perturbed singular
Liouville equations in $\R^2$.

\medskip

\section{A Minimal Mass Lemma}\label{sec2}
Let us consider a solution sequence $v_n$ of the problem:
\begin{equation}
 \label{eqbase}
 \graf{
 -\D v_n=\left({\epsilon_n^2+|x|^2}\right)^{\alpha_n}V_n e^{\dis v_n} \quad \text{in}\quad B_1,\\
 \hfill \\
 \int_{B_1}\left({\epsilon_n^2+|x|^2}\right)^{\alpha_n} e^{\dis v_n}\leq C }
\end{equation}
with $\epsilon_n\to0^+$,
\begin{equation}
 \label{alpha_n}
  \alpha_n\to\alpha_\infty\in(-1,1),\,\,\text{as}\,\,n\to+\infty
\end{equation}
and assume that $V_n$ satisfies,
\begin{equation}
 \label{V+}
 V_n\geq 0, \,\, V_n\in C^{0}(\overline B_1),
\end{equation}
\begin{equation}
 \label{Vconvergence1}
 V_n\to V \quad \text{uniformly in}\,\,\overline{B}_1.
\end{equation}

We assume that there exists a sequence of points $\{x_n\}\subset B_1$ such that
\begin{align}
\label{blowup:hyp}
 x_n\to 0 \,\,\,\,\text{and}&\,\,\,\, \underset{B_1}\sup\, v_n= v_n(x_n)\to +\infty, \,\,\text{as}\,\, n\to +\infty.
\end{align}

Here and in the rest of this paper we will often pass to subsequences which will not be relabelled.

\bigskip
\begin{lemma}[Minimal Mass Lemma]\label{minimalmasslemma}$\,$\\
Let $v_n$ be a sequence of solutions of \eqref{eqbase} satisfying \eqref{blowup:hyp}, where $V_n$ and $V$ satisfy \eqref{V+}
and \eqref{Vconvergence1}. Assume furthermore that \eqref{alpha_n} is satisfied and
\begin{equation*}
 \left({\epsilon_n^2+|x|^2}\right)^{\alpha_n}V_n e^{\dis v_n} \rightharpoonup \eta \qquad \text{weakly in the sense of measures in}\, B_1,
\end{equation*}
for some bounded Radon measure $\eta$, where we define,
$$
\beta:=\eta(\{0\}).
$$
Then $V(0)>0$ and,
\begin{align}
 \label{minimalmass}
\beta\geq\begin{cases}
            8\pi(1+\alpha_\infty), &\text{if}\,\,\al_\ii<0, \\
            8\pi, &\text{if}\,\,\al_\ii\geq 0.
            \end{cases}
\end{align}
\end{lemma}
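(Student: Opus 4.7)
The plan is to follow the standard blow-up paradigm, taking extra care to handle the interplay between the blow-up scale and the perturbed weight $(\eps_n^2+|x|^2)^{\a_n}$. First I would establish $V(0)>0$ by contradiction: if $V(0)=0$, the uniform convergence $V_n\to V$ yields $V_n\le \eta$ on a small ball $B_r$ for any fixed $\eta>0$, whence $\int_{B_r}(\eps_n^2+|x|^2)^{\a_n}V_n e^{v_n}\le C\eta$ by \eqref{eqbase}. A Brezis--Merle type inequality, adapted to the weight $(\eps_n^2+|x|^2)^{\a_n}$ (locally integrable since $\a_\ii>-1$), would then produce a uniform $L^\ii$ bound on $v_n^+$ in $B_{r/2}$, contradicting $v_n(x_n)\to+\ii$ with $x_n\to 0$.

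Once $V_n(x_n)\to V(0)>0$, I would perform the standard blow-up rescaling at $x_n$: set
$$
\dn := \bigl(V_n(x_n)(\eps_n^2+|x_n|^2)^{\a_n}e^{v_n(x_n)}\bigr)^{-1/2}\to 0,\qquad w_n(y):= v_n(x_n+\dn y)- v_n(x_n),
$$
so that $w_n(0)=0=\sup w_n$ and $-\D w_n=\rho_n(y)\,e^{w_n}$ with $\rho_n(y):=\frac{V_n(x_n+\dn y)}{V_n(x_n)}\frac{(\eps_n^2+|x_n+\dn y|^2)^{\a_n}}{(\eps_n^2+|x_n|^2)^{\a_n}}$, and $\int\rho_n e^{w_n}\,dy\le C$ after a change of variables. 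Setting $\mu_n:=\sqrt{\eps_n^2+|x_n|^2}$, I would pass to a subsequence and distinguish two cases. If $\dn/\mu_n\to 0$, then $\rho_n\to 1$ locally uniformly, so Harnack-type regularity yields $w_n\to w$ in $C^2_{\mathrm{loc}}(\R^2)$ with $-\D w=e^w$ on $\R^2$ and $\int_{\R^2}e^w<+\ii$; by Chen--Li's classification $\int_{\R^2}e^w=8\pi$, and Fatou's lemma shows the mass clustering near $x_n$ contributes at least $8\pi$ to $\beta$.

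Otherwise $\dn/\mu_n$ stays bounded away from $0$; in this case I would shift the rescaling to the origin on scale $\mu_n$, setting $\hat w_n(y):= v_n(\mu_n y)+2(1+\a_n)\log\mu_n$, which solves $-\D\hat w_n=(\eps_n^2/\mu_n^2+|y|^2)^{\a_n}V_n(\mu_n y)\,e^{\hat w_n}$. Along a further subsequence, $\eps_n/\mu_n\to t\in[0,1]$ and $x_n/\mu_n\to y_\ii$ with $t^2+|y_\ii|^2=1$; standard elliptic estimates would give $\hat w_n\to\hat w$ in $C^2_{\mathrm{loc}}$ (on $\R^2$ if $t>0$, on $\R^2\setminus\{0\}$ if $t=0$) with $\hat w$ solving $-\D \hat w=(t^2+|y|^2)^{\a_\ii}V(0)\,e^{\hat w}$ on $\R^2$ and total mass equal to $\beta$. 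The classification recalled in the Appendix (Chen--Li for $t>0$, Prajapat--Tarantello for $t=0$) then yields
\begin{equation*}
\int_{\R^2}(t^2+|y|^2)^{\a_\ii}V(0)\,e^{\hat w}\ \ge\ \begin{cases} 8\pi(1+\a_\ii), & \a_\ii<0,\\ 8\pi, & \a_\ii\ge 0. \end{cases}
\end{equation*}
The hard part lies in this second case: verifying that $\hat w_n$ is locally uniformly bounded above (so that the limit $\hat w$ is non-trivial) and that no mass escapes to infinity in the limit, so that the total mass of $\hat w$ indeed equals $\beta$. The dichotomy $\a_\ii\gtrless 0$ reflects the transition between the singular-bubble regime (mass $8\pi(1+\a_\ii)$, when the limit sees $\{0\}$ as a conical point) and the regular-bubble regime (mass $8\pi$, when the limit is effectively regular).
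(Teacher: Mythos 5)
Your overall architecture is the paper's: blow up, rescale, and reduce to the classification results recalled in the Appendix (your Case A repackages the paper's Cases (I)--(II), your Case B its Case (III)). Case A is correct as sketched, and your preliminary step $V(0)>0$ via a weighted Brezis--Merle argument is a viable alternative to the paper (which extracts $V(0)>0$ from the rescaled limits in each case), provided you actually control the harmonic part, e.g. through an $L^1$ bound on $v_n^+$ near the origin, which uses $\alpha_\infty<1$.

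The genuine gap is in Case B. There you rescale at the fixed scale $\mu_n=\sqrt{\epsilon_n^2+|x_n|^2}$, but Case B only guarantees $\delta_n/\mu_n\geq c>0$, and in the subcase $\delta_n/\mu_n\to+\infty$ (equivalently $\epsilon_n/\delta_n\to 0$ and $|x_n|/\delta_n\to 0$, i.e. the paper's Case (III) with $\epsilon_0=0$ and $y_0=0$) your argument breaks down: since $\sup_{B_1}v_n=v_n(x_n)$, one has $\sup \hat w_n=v_n(x_n)+2(1+\alpha_n)\log\mu_n=-2\log(\delta_n/\mu_n)+O(1)\to-\infty$, so $\hat w_n\to-\infty$ uniformly on compact sets, there is no nontrivial limit $\hat w$, and no mass at all is seen at scale $\mu_n$ (the bubble lives at the much larger scale $\delta_n$). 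This is not a matter of ``verifying'' local upper bounds, which hold trivially in Case B; it is the wrong choice of scale, and the missing subcase is exactly the one that produces the value $8\pi(1+\alpha_\infty)$ for $\alpha_\infty<0$, so it cannot be discarded. The repair is the paper's: in this regime rescale by $\delta_n$ itself, setting $\widetilde w_n(y)=v_n(\delta_n y)-v_n(x_n)$, whose maximum is $0$ at the bounded points $x_n/\delta_n$; local compactness then follows from the Harnack inequality because the rescaled weight $(\epsilon_n^2/\delta_n^2+|y|^2)^{\alpha_n}$ is uniformly bounded in $L^p_{loc}$ for some $p>1$ and $e^{\widetilde w_n}\leq 1$, and Lemma \ref{massstrange} with $\epsilon_0=\lim\epsilon_n/\delta_n$ gives the stated lower bounds. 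Two further remarks: (i) you do not need, and should not try to prove, that the total mass of the limit equals $\beta$ -- Fatou's lemma on compact sets yields the one-sided inequality, which is all the statement requires, and ruling out mass escaping to infinity would be an unnecessary extra difficulty; (ii) when $t=0$ in your Case B you must also exclude a Dirac mass of the limit at the origin before invoking the classification; here this is easy (same $L^p_{loc}$ bound on the right hand side as above), but it should be said rather than hedged by ``convergence on $\R^2\setminus\{0\}$''.
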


\begin{proof}
 First of all, as far as \eqref{minimalmass} is concerned, it is enough to show that, for some $r\in(0,1)$,
\begin{align}
 \label{minimalmass2negative}
\underset{n\to\infty}\liminf \int_{B_r(0)}\left({\epsilon_n^2+|x|^2}\right)^{\alpha_n}V_n e^{\dis v_n} \geq\begin{cases}
            8\pi(1+\alpha_\infty), &\text{if}\,\,\al_\ii<0, \\
            8\pi, &\text{if}\,\,\al_\ii\geq 0.
            \end{cases}
\end{align}
Let us fix $r\in(0,1)$ and, for $n$ large enough, by \eqref{blowup:hyp} assume without loss of generality that,
$$
\underset{\overline B_r}\sup\, v_n= v_n(x_n)\to +\ii \;\quad \mbox{and}\;\quad |x_n|\to 0.
$$
Here and in the rest of this proof we set
\[
 \delta_n^{2(1+\bns)}=e^{-\dis {v_n(x_n)}}\to 0,\,\, \text{as} \,\, n\to\infty,
\]
and
\[
 t_n=\max\{|x_n|,\delta_n\}\to 0,\,\, \text{as} \,\, n\to\infty.
\]

Here, we are naturally led to analyze three different cases:
\begin{itemize}
 \item Case (I): there exists a subsequence such that $\tfrac{\epsilon_n}{t_n}\to+\infty$, as $n\to+\infty$;
 \item Case (II): there exists a constant $C_1>0$ such that
\begin{equation}
 \label{Hypothesis:CaseII}
 \frac{\epsilon_n}{t_n}\leq C_1,\,\,\text{for all}\,\,n\in\N;
\end{equation}
 and, possibly along a subsequence, $\tfrac{|x_n|}{\delta_n}\to+\infty$, as $n\to+\infty$,
 \item Case (III): there exists a constants $C_1>0$ and $C_2>0$ such that
 \begin{equation}
 \label{Hypothesis:CaseIII}
 \frac{\epsilon_n}{t_n}\leq C_1,\quad \tfrac{|x_n|}{\delta_n}\leq C_2,\,\,\text{for all}\,\,n\in\N.
\end{equation}
\end{itemize}

We start with Case (I) and notice that,
\begin{equation*}
 \frac{\epsilon_n}{\delta_n}\to+\infty\qquad\text{and}\qquad \frac{\epsilon_n}{|x_n|}\to+\infty.
\end{equation*}
Let us define
\begin{equation*}
B^{(n)}=B_{\frac{r}{\epsilon_n}}(0),
\end{equation*}
and
\begin{equation*}
 w_n(z)=v_n(\epsilon_n z)+2(1+\bns)\log\epsilon_n\quad\text{in}\quad B^{(n)},
\end{equation*}
which satisfies
\begin{align*}
\begin{cases}
-\D w_n=W_n(z)e^{\dis w_n} \quad\text{in}\quad B^{(n)},\\
W_n(z)=\left({1+|z|^2}\right)^{\alpha_n} V_n(\epsilon_n z)\to
V(0)\left({1+|z|^2}\right)^{\alpha_\infty}\;\mbox{in}\;C^t_{loc}(\R^2),\\
\int\limits_{B^{(n)}}\left({1+|z|^2}\right)^{\alpha_n}e^{\dis w_n}\leq C,\\
w_n(\tfrac{x_n}{\epsilon_n})=2(1+\alpha_n)\log(\tfrac{\epsilon_n}{\delta_n})\to +\infty.
\end{cases}
\end{align*}
In particular, we notice that $\tfrac{x_n}{\epsilon_n}\to 0$ and also that,
\[
 \int_{B_{R}(0)}e^{\dis w_n}\leq \tilde C,
\]
for any $R>0$ and some constant $\tilde C>0$. Thus, by the concentration-compactness alternative
for regular Liouville-type equations
on $B_{R}(0)$ (see \cite{bm} and \cite{ls}), we see that $V(0)>0$ and
\[
 \underset{n\to\infty}\liminf \int_{B_{R}(0)}W_n(z)e^{\dis w_n}=8m\pi,
\]
where $m\in\N^+$, so that, for any $r\in (0,1)$, we have that
\[
 \underset{n\to\infty}\liminf \int_{B_r(0)}\left({\epsilon_n^2+|x|^2}\right)^{\alpha_n}V_n e^{\dis v_n} =
 \underset{n\to\infty}\liminf \int_{B^{(n)}}\left({1+|z|^2}\right)^{\alpha_n}V_n(\epsilon_n z)e^{\dis w_n} \geq
\]
\[
 \geq \underset{n\to\infty}\liminf \int_{B_{R}(0)}\left({1+|z|^2}\right)^{\alpha_n}V_n(\epsilon_n z)e^{\dis w_n}=8\pi m,
\]

which implies \eqref{minimalmass2negative}, thereby concluding the proof in Case (I).
\medskip

Next, we consider Case (II), where we notice that,
\begin{equation}
 \label{epsilonxn}
 \frac{\epsilon_n}{|x_n|}\leq C
\end{equation}
for some $C>0$ constant. Indeed, in this case we have,
\begin{align*}
 \frac{\epsilon_n}{|x_n|}=\frac{\epsilon_n}{t_n}\frac{t_n}{\delta_n}\frac{\delta_n}{|x_n|}=\begin{cases}
         \frac{\epsilon_n}{t_n}\frac{\delta_n}{|x_n|},&\quad\text{if}\,\,\delta_n\geq|x_n|,\\
         \frac{\epsilon_n}{t_n}, &\quad\text{if}\,\,\delta_n\leq|x_n|.                                                                                    \end{cases}
\end{align*}
Then, besides
\[
 \frac{|x_n|}{\delta_n}\to+\infty,\,\,\text{as}\,\,n\to+\infty,
\]
possibly along a subsequence, we also have that,
\[
 \frac{\epsilon_n}{|x_n|}\to\bar \epsilon_0,\quad \frac{x_n}{|x_n|}\to \bar z_0,\;\;|\bar z_0|=1,\quad \text{as}\,\,n\to+\infty,
\]
for some $\bar \epsilon_0 \geq 0$. In this situation we define,
\begin{equation}\label{dn1}
D^{(n)}=B_{\frac{r}{|x_{n}|}}(0),
\end{equation}
\begin{equation}\label{barwn:IB}
 \bar w_{n}(z)=v_n(|x_{n}| z)+2(1+\alpha_n)\log(|x_{n}|)\quad\text{in}\quad D^{(n)},
\end{equation}
which satisfies
\begin{align*}
\begin{cases}
-\D \bar w_{n}=\bar W_{n}(z)e^{\dis \bar w_{n}} \quad\text{in}\quad D^{(n)},\\
\bar W_{n}(z)=\left({\frac{\epsilon_n^2}{|x_{n}|^2}+|z|^2}\right)^{\alpha_n}V_n(|x_{n}| z)
\to V_n(0)(\bar \epsilon_0^2+|z|^2)^{\alpha_\infty}\;\mbox{in}\;C^t_{loc}(\R^2),\\
\int\limits_{D^{(n)}}\left({\frac{\epsilon_n^2}{|x_{n}|^2}+|z|^2}\right)^{\alpha_n} e^{\dis \bar w_n}\leq C,\\
\bar w_{n}(\tfrac{x_{n}}{|x_{n}|})=2(1+\alpha_n)\log(\tfrac{|x_{n}|}{\delta_{n}})\to +\infty,
\end{cases}
\end{align*}
 Again, we notice that,
\[
 \int_{B_{R}(\bar z_0)}e^{\dis \bar w_n}\leq \bar C,
\]
for some $R>0$ small enough and some constant $\bar C>0$. Therefore, again by the concentration-compactness
alternative for regular Liouville-type equations on $B_{R}(\bar z_0)$ (see \cite{bm} and \cite{ls}), we see that
$(\bar \epsilon_0^2+1)^{\al_{\ii}}V(0)>0$ and
\[
 \underset{n\to\infty}\liminf \int_{B_{R}(\bar z_0)}\bar W_{n}(z)e^{\dis \bar w_n}=8\pi m,
\]
where $m\in\N^+$. As a consequence we have that,
\[
 \underset{n\to\infty}\liminf \underset{B_r(0)}\int \left({\epsilon_n^2+|x|^2}\right)^{\alpha_n}V_n e^{\dis v_n} = \underset{n\to\infty}\liminf \underset{D^{(n)}}
 \int \left({\frac{\epsilon_n^2}{|x_n|^2}+|z|^2}\right)^{\alpha_n}V_{n}(|x_n|z)e^{\dis \bar w_n}
\]
\[\geq\underset{n\to\infty}\liminf \int_{B_{R}(0)} \left({\frac{\epsilon_n^2}{|x_n|^2}+|z|^2}\right)^{\alpha_n}V_{n}(|x_n|z)
e^{\dis \bar w_n}=8\pi m.
\]
This is \eqref{minimalmass2negative}, which concludes the study of Case (II).

\medskip
At last, we examine Case (III). We notice that \eqref{Hypothesis:CaseIII} implies that,
\begin{equation*}
 \frac{\epsilon_n}{\delta_n}\leq C,
\end{equation*}
for some $C>0$ constant, since in fact we have,
\begin{align*}
 \frac{\epsilon_n}{\delta_n}=\frac{\epsilon_n}{t_n}\frac{t_n}{|x_n|}\frac{|x_n|}{\delta_n}=\begin{cases}
         \frac{\epsilon_n}{t_n},&\quad\text{if}\,\,\delta_n\geq|x_n|,\\
         \frac{\epsilon_n}{t_n}\frac{|x_n|}{\delta_n}, &\quad\text{if}\,\,\delta_n\leq|x_n|.                                                                                    \end{cases}
\end{align*}
Let us define
$$
D_n=B_{\frac{r}{\delta_n}}(0),
$$
and possibly along a subsequence, assume without loss of generality that,
\begin{equation*}
 \frac{x_n}{\delta_n}\to y_0, \,\,\text{as}\,\,n\to+\infty
\end{equation*}
 and
\begin{equation*}
 \frac{\epsilon_n}{\delta_n}\to \epsilon_0, \,\,\text{as}\,\,n\to+\infty,
\end{equation*}
for some $y_0\in\R^2$ and $\epsilon_0\geq 0$. At this point, we define,
\begin{equation*}
 \widetilde w_n(y)=v_n(\delta_n y)+2(1+\alpha_n)\log \delta_n=v_n(\delta_n y)-v_n(x_n)\quad\text{in}\,\, D_n,
\end{equation*}
which satisfies,
\begin{align}\label{equationgoodcase}
\begin{cases}
 -\D \widetilde w_n=\left({\frac{\epsilon_n^2}{\delta_n^2}+\left|y\right|^2}\right)^{\alpha_n}
 V_n(\delta_n y)e^{\dis \widetilde w_n}:=f_n \qquad\text{in}\,\,D_n, \\
\int_{D_n}\left({\frac{\epsilon_n^2}{\delta_n^2}+|y|^2}\right)^{\alpha_n}e^{\dis \widetilde w_n}\leq C,\\
\widetilde w_n(y)\leq\widetilde w_n(\tfrac{x_n}{\delta_n})=\max\limits_{D_n}\widetilde w_n=0.
\end{cases}
\end{align}
Remark that $f_n$ is uniformly bounded in $L^{p}(B_R)$, for $R$ large enough where $p=p(|\sigma|)>1$, if $\sigma<0$, while $p=\infty$, if $\sigma\geq0$.
Moreover we see that, by the Harnack inequality, for every $R\geq 1+ 2|y_0|$ there exists a constant $C_R>0$ such that
\begin{equation}
 \label{Estimateboundary}
  \sup_{\p B_{R}}|\widetilde w_n|\leq C_R.
\end{equation}
Indeed, since $f_n$ is uniformly bounded in $L^p(B_{2R})$, for some $p>1$, and
$$\sup_{\p B_{2R}}\widetilde w_n\leq \sup_{ B_{4R}}\widetilde w_n =\widetilde w_n(\tfrac{x_n}{\delta_n})=0,$$
then there exist $\tau\in(0,1)$ and $C_0>0$, which does not depend by $n$, such that
\[
 \sup_{ B_{R}} \widetilde w_n\leq \tau\inf_{B_{R}}\widetilde w_n +C_0
\]
and this implies that
\[
 \inf_{\p B_{R}}\widetilde w_n=\inf_{ B_{R}}\widetilde w_n\geq -\tau^{-1}C_0,
\]
where we used the fact $\sup\limits_{ B_{R}} \widetilde w_n=0$, for any $n$ large enough, and the superharmonicity of $\widetilde w_n$.\\
Therefore, by \eqref{equationgoodcase}, \eqref{Estimateboundary} and standard elliptic estimates, we conclude that $\widetilde w_n$ is uniformly
bounded in $C^{t}_{\rm loc}(\R^2)$, for some $t\in(0,1)$ and then, possibly along a subsequence, $\widetilde w_n\to \widetilde w$
uniformly on compact sets of $\R^2$, where $\widetilde w$ satisfies
\begin{align*}
\begin{cases}
 -\D \widetilde w=(\epsilon_0^2+|y|^2)^{\alpha_\infty}V(0)e^{\dis \widetilde w} \quad\text{in}\quad\R^2 \\
\int_{\R^2}(\epsilon_0^2+|y|^2)^{\alpha_\infty}e^{\dis \widetilde w}\leq C
\end{cases}
\end{align*}
Since $\widetilde w$ is bounded from above, then necessarily $V(0)\neq0$. At this point, by Lemma \ref{massstrange} below, we have that either,

\begin{align*}
\underset{\R^2}\int(\epsilon_0^2+|y|^2)^{\alpha_\infty}V(0)e^{\dis \widetilde w}\geq 8\pi(1+\alpha_\infty), \quad&\text{if}\quad \alpha_\infty<0,
\end{align*}
or,
\begin{align*}
\underset{\R^2}\int(\epsilon_0^2+|y|^2)^{\alpha_\infty}V(0)e^{\dis \widetilde w}> \max\{8\pi,4\pi(1+\alpha_\infty)\}\geq 8\pi, \quad &\text{if}\quad \alpha_\infty\geq 0.
\end{align*}
Therefore we have,
\[
 \underset{n\to\infty}\liminf \underset{B_r(0)}\int \left({\epsilon_n^2+|x|^2}\right)^{\alpha_n}V_n e^{\dis v_n} =
 \underset{n\to\infty}\liminf \underset{D_n}\int \left({\frac{\epsilon_n^2}{\delta_n^2}+\left|y\right|^2}\right)^{\alpha_n}
 V_n(\delta_n y)e^{\dis \widetilde w_n}\geq\underset{\R^2}\int (\epsilon_0^2+|y|^2)^{\alpha_\infty}V(0)e^{\dis \widetilde w}
 \]
which implies that in this case we have either,
\begin{align*}
\underset{n\to\infty}\liminf \underset{B_r(0)}\int\left({\epsilon_n^2+|x|^2}\right)^{\alpha_n}V_n e^{\dis v_n} \geq 8\pi(1+\alpha_\infty),
\quad&\text{if}\quad \alpha_\infty<0,
\end{align*}
or
\begin{align*}
\underset{n\to\infty}\liminf \underset{B_r(0)}\int \left({\epsilon_n^2+|x|^2}\right)^{\alpha_n}V_n e^{\dis v_n} >
\max\{8\pi,4\pi(1+\alpha_\infty)\}\geq8\pi, \quad &\text{if}\quad \alpha_\infty\geq 0.
\end{align*}
Therefore we readily verify that \eqref{minimalmass2negative} holds in Case (III) as well, which concludes the proof.
\end{proof}

\bigskip
\bigskip

\section{A ``Sup+ C Inf'' estimate}

In this section, we prove Theorem \ref{intro:maintheo}.
\begin{proof}
Without loss of generality, we can assume that $K\subset B_2(0)\Subset\O$. We argue by contradiction and assume that there exist
$v_n$ and $V_n$ which satisfy the hypotheses of Theorem \ref{intro:maintheo} such that,
\begin{equation}
 \label{contradiction:sup+Cinf}
 \underset{K}\sup\, v_n + C_1\underset{\O}\inf\, v_n\to+\infty,
\end{equation}
for some
$$
C_1>\max\left\{1,\tfrac{1+\alpha_\infty}{1-\alpha_\infty}\right\}.
$$
Let $y_n\in K$ such that $\sup_K\,v_n=v_n(y_n)$. By \eqref{contradiction:sup+Cinf}, we have that $v_n(y_n)\to+\infty$ and, up to a subsequence, we can assume that $y_n\to y_0$.
If $y_0\neq 0$, then $\hat V_n(x)=(\epsilon_n^2+|x|^2)^{\alpha_n}V_n(x)$ satisfies \eqref{intro: Vn} in a small enough neighborhood of $y_0$ and
the contradiction follows from Remark 1.3 in \cite{b1}.\\
Hence, we are left with the case $y_0=0$. Let us define, for $r\in(0,1]$ and $n\in\N$, the following function
\[
 \psi_n(r;\beta):=v_n(y_n)+\tfrac{C_1}{2\pi r}\int_{\p B_r(y_n)}v_n\,d\sigma+2(1+C_1)(1+\beta)\log r.
\]
By using the divergence theorem, we see that,
\begin{align*}
 \tfrac{\p \psi_n}{\p r}(r;\alpha_n)&=\tfrac{C_1}{2\pi r}\int_{ B_r(y_n)}\D v_n\,dx+2(1+C_1)(1+\alpha_n)\tfrac{1}{r}\\
 &=-\tfrac{C_1}{2\pi r}\int_{B_r(y_n)}(\epsilon_n^2+|x|^2)^{\alpha_n}V_ne^{v_n}\,dx+2(1+C_1)(1+\alpha_n)\tfrac{1}{r},
\end{align*}
for every $r\in(0,1]$ and for every $n\in\N$. Thus we have
\[
 \tfrac{\p \psi_n}{\p r}(r;\alpha_n)\geq 0\qquad\text{if and only if}\qquad \int_{ B_r(y_n)}(\epsilon_n^2+|x|^2)^{\alpha_n}V_ne^{v_n}\,dx\leq 4\pi(1+\alpha_n)\tfrac{C_1+1}{C_1}.
\]
Since $C_1>\max\{1,\tfrac{1+\alpha_\ii}{1-\alpha_\ii}\}$, for every $n$ large enough, $C_1>\max\{(1+2\tfrac{\alpha_\ii-\alpha_n}{1+\alpha_n})^{-1}, \tfrac{1+\alpha_n}{1-\alpha_n}\}$ and
\begin{equation}
\label{limitpi}
 4\pi(1+\alpha_n)\tfrac{C_1+1}{C_1}<\min\{8\pi,8\pi(1+\alpha_\ii)\},
\end{equation}
Moreover, since $C_1>1$, then
\begin{equation}
\label{limitpi2}
 4\pi\tfrac{C_1+1}{C_1}<8\pi.
\end{equation}
Now, we define
\[
 s_n:=\sup\left\{r\in(0,1]\,\Big|\,\int_{ B_r(y_n)}(\epsilon_n^2+|x|^2)^{\alpha_n}V_ne^{v_n}\,dx\leq 4\pi(1+\alpha_n)\tfrac{C_1+1}{C_1}\right\}
\]
and
\[
 \sigma_n:=\sup\left\{r\in(0,1]\,\Big|\,\int_{ B_r(y_n)}(\epsilon_n^2+|x|^2)^{\alpha_n}V_ne^{v_n}\,dx\leq 4\pi\tfrac{C_1+1}{C_1}\right\}.
\]
Hence, we deduce that $\psi_n(r;\alpha_n)$ and $\psi_n(r;0)$ have respectively a unique point of maximum with
\[
 \psi_n(1;\alpha_n)\leq \psi_n(s_n;\alpha_n)
\qquad\text{and}\qquad
\psi_n(1;0)\leq \psi_n(\sigma_n;0).
\]
Also, by \eqref{limitpi}, we have that, for $n$ large enough,
\begin{equation*}
 \int_{ B_{s_n}(y_n)}(\epsilon_n^2+|x|^2)^{\alpha_n}V_ne^{v_n}\,dx<\min\{8\pi,8\pi(1+\alpha_\infty)\}.
\end{equation*}
On the other hand, by \eqref{limitpi2}, we have that
\begin{equation*}
 \int_{ B_{\sigma_n}(y_n)}(\epsilon_n^2+|x|^2)^{\alpha_n}V_ne^{v_n}\,dx<8\pi.
\end{equation*}
Hence,
\begin{align*}
 \sup_K\,v_n+C_1\inf_{\O}\,v_n&\leq v_n(y_n)+C_1\inf_{B_1(y_n)}\,v_n\leq \psi_n(1;\alpha_n)\\
 &\leq \psi_n(s_n;\alpha_n)\leq (1+C_1)v_n(y_n)+2(1+C_1)(1+\alpha_n)\log s_n
\end{align*}
where we used the superhamonicity of $v_n$. Thus, by using \eqref{contradiction:sup+Cinf} we deduce that,
\begin{equation}
 \label{blowupsequenceatsn}
 v_n(y_n)+2(1+\alpha_n)\log s_n\to+\infty.
\end{equation}
The same argument shows that,
\begin{equation}
 \label{blowupsequenceatsigman}
 v_n(y_n)+2\log \sigma_n\to+\infty.
\end{equation}
Without loss of generality, we can assume that $s_n\to0^+$. Indeed, assume by contradiction that
$\limsup_{n\to\infty}s_n>0$, then there exists a subsequence of $s_{n}$,
which we do not relabel, such that $0<2\bar s \leq s_{n}\leq 1$, for every $n$ large enough.
In particular, by passing to a further subsequence if necessary,
we would also have that,
\begin{equation}
\label{massunder8pi1}
 \int_{ B_{\bar s}(0)}(\epsilon_n^2+|x|^2)^{\alpha_n}V_ne^{v_n}\,dx\leq 4\pi(1+\alpha_n)\tfrac{C_1+1}{C_1}<\min\{8\pi,8\pi(1+\alpha_\infty)\}.
\end{equation}
We notice that, since $y_n\to 0$ and $v_n(y_n)\to+\infty$, $0$ is a blow up point for $v_n$ in $B_{\bar s}(0)$. Also, by using the fact that $V_n\geq a>0$, we deduce that,
\[
 \int_{ B_{\bar s}(0)}(\epsilon_n^2+|x|^2)^{\alpha_n}e^{v_n}\,dx\leq 4\pi(1+\alpha_n)\tfrac{C_1+1}{aC_1}.
\]
Thus, by the Minimal Mass Lemma \ref{minimalmasslemma} for $v_n$ in $B_{\bar s}(0)$, we would have, possibly along a subsequence,
\[
 \int_{ B_{\rho}(0)}(\epsilon_n^2+|x|^2)^{\alpha_n}V_ne^{v_n}\,dx\geq \begin{cases}
            8\pi(1+\alpha_\infty), &\text{if}\,\,\al_\ii\in(-1,0), \\
            8\pi, &\text{if}\,\,\al_\ii\in[0,1),
            \end{cases}
\]
for every $\rho<\bar s$ small enough, which contradicts \eqref{massunder8pi1}. Thus necessarily $s_n\to0^+$.\\

Clearly, $v_n$ satisfies
\begin{align}\label{uninBsn}
 \begin{cases}
 -\D v_n=(\epsilon_n^2+|x|^2)^{\alpha_n}V_ne^{v_n}\qquad\text{in}\,\,B_{s_n}(y_n),\\
  \int_{B_{s_n}(y_n)}(\epsilon_n^2+|x|^2)^{\alpha_n}V_ne^{v_n}\,dy\leq 4\pi(1+\alpha_n)\tfrac{C_1+1}{C_1}<\min\{8\pi,8\pi(1+\alpha_\infty)\}.
 \end{cases}
\end{align}
At this point let us define,
\[
 t_n:=\max\{|y_n|,s_n\}\to0^+,\,\,\text{as}\,\,n\to+\infty.
\]
Here, we are naturally led to analyze three different cases:
\begin{itemize}
 \item (I) there exists a subsequence such that $\tfrac{\epsilon_n}{t_n}\to+\infty$, as $n\to+\infty$,
 \item (II) there exists a constant $C_1>0$ such that $\tfrac{\epsilon_n}{t_n}\leq C_1$, for all $n\in\N$ and, possibly along a subsequence,
 $\tfrac{|y_n|}{s_n}\to+\infty$, as $n\to+\infty$.
 \item (III) there exist constants $C_1>0$ and $C_2>0$ such that $\tfrac{\epsilon_n}{t_n}\leq C_1$ and $\tfrac{|y_n|}{s_n}\leq C_2$ for all $n\in\N$.
\end{itemize}

\medskip

Let us begin with Case (I) and consider first $\alpha_\infty\in[0,1)$.

\medskip

First of all notice that we also have $\tfrac{\epsilon_n}{s_n}\to+\infty$ and $\tfrac{\epsilon_n}{|y_n|}\to+\infty$, as $n\to+\infty$. Hence, we define
\begin{equation}
 \nonumber
  w_n(y):=v_n(y_n+s_ny)+2\log s_n+2\alpha_{n}\log \epsilon_n\qquad\text{in}\,\,B_1(0),
\end{equation}
which satisfies
\begin{align*}
 \begin{cases}
  -\D w_n=W_n(y)e^{\dis w_n}\qquad\text{in}\,\,B_1(0),\\
  W_n(y)=(1+|\tfrac{y_n}{\epsilon_n}+\tfrac{s_n}{\epsilon_n}y|^2)^{\alpha_n}V_n(y_n+s_n y)\to V(0)\qquad \text{in}\,\,C^0_{loc}(B_1(0)),\\
  \int_{B_1(0)}W_n(y)e^{\dis w_n}\,dy\leq 4\pi(1+\alpha_n)\tfrac{C_1+1}{C_1}<8\pi.
 \end{cases}
\end{align*}
Since $\tfrac{\epsilon_n}{s_n}\to+\infty$, for $n$ large we have
\[
 w_n(0)=v_n(y_n)+2\log s_n+2\alpha_{n}\log \epsilon_n\geq v_n(y_n)+2(1+\alpha_n)\log s_n,
\]
which, by \eqref{blowupsequenceatsn}, implies that $y=0$ is a blow up point for $w_n$ in $B_1(0)$. Then, we can apply the standard blow up analysis for Liouville equations (\cite{bm},\cite{ls}) to deduce that
\[
 \liminf\limits_{n\to +\ii}\int_{B_{s_n\rho}(y_n)}(\epsilon_n^2+|x|^2)^{\alpha_n}V_n e^{v_n}=\liminf\limits_{n\to +\ii}\int_{B_\rho(0)}W_n(y)e^{w_n}\,dy\geq 8\pi,
\]
for $\rho\in(0,1)$ small enough, which is a contradiction.

\medskip

Now, let us consider Case (I) with $\alpha_\infty\in(-1,0)$.

\medskip

Clearly we still have that $\tfrac{\epsilon_n}{s_n}\to+\infty$ and $\tfrac{\epsilon_n}{|y_n|}\to+\infty$, as $n\to+\infty$.
We notice first that $v_n$ satisfies,
\begin{align}
 \label{uninBsigman}
 \begin{cases}
 -\D v_n=(\epsilon_n^2+|x|^2)^{\alpha_n}V_ne^{\dis v_n}\qquad\text{in}\,\,B_{\sigma_n}(y_n),\\
  \int_{B_{\sigma_n}(y_n)}(\epsilon_n^2+|x|^2)^{\alpha_n}V_ne^{\dis v_n}\,dy\leq 4\pi\tfrac{1+C_1}{C_1}<8\pi.
 \end{cases}
\end{align}
Let us define the following quantities
\[
 \bar\delta_n:=e^{-\tfrac{v_n(y_n)}{2}}\to0^+,\qquad\qquad\tau_n:=\tfrac{\bar\delta_n}{\epsilon_n^{\alpha_n}}=\epsilon_n^{|\alpha_n|}\bar\delta_n\to0^+,
\]
and
\[
 d_n:=\tfrac{\sigma_n}{\tau_n}=\epsilon_n^{-|\alpha_n|}e^{\frac{1}{2}(v_n(y_n)+2\log(\sg_n))}\to+\infty,
\]
as $n\to+\infty$. Remark that, by \eqref{blowupsequenceatsn},
\[
 \tfrac{\tau_n}{\epsilon_n}=\tfrac{\bar\delta_n}{\epsilon_n^{1+\alpha_n}}=e^{-\frac{1}{2}(v_n(y_n)+2(1+\alpha_n)\log(\epsilon_n))}\leq
 e^{-\frac{1}{2}(v_n(y_n)+2(1+\alpha_n)\log(s_n))}\to 0^+,
\]
as $n\to+\infty$. Thus we define
\[
 u_n(y):=v_n(y_n+\tau_n y)+2\log \tau_n+2\alpha_n\log\epsilon_n=v_n(y_n+\tau_n y)-v_n(y_n),\qquad y\in B_{d_n}(0),
\]
which satisfies
\begin{align*}
 \begin{cases}
  -\D u_n= K_n(y)e^{\dis u_n}\qquad\text{in}\,\,B_{d_n}(0),\\
  K_n(y)=(1+|\tfrac{y_n}{\epsilon_n}+\tfrac{\tau_n}{\epsilon_n}y|^2)^{\alpha_n}V_n(y_n+\tau_n y)\to V(0)\qquad \text{in}\,\,C^0_{loc}(B_{d_n}(0)),\\
  \int_{B_{d_n}(0)}K_n(y)e^{\dis u_n}\,dy\leq 4\pi\tfrac{1+C_1}{C_1}<8\pi, \\
  u_n(y)\leq u_n(0)=0.
 \end{cases}
\end{align*}
It is readily seen that, for every $R>0$,
\[
 \int_{B_R(0)} e^{\dis u_n}\leq C,
\]
and then we can apply the concentration compactness alternative for Liouville equations (\cite{bm}) to deduce that $u_n$, up to a subsequence,
locally converges to $u$ in $C^0_{loc}(\R^2)$, where $u$ satisfies
\begin{align*}
 \begin{cases}
  -\D u=V(0)e^{\dis u}\qquad\text{in}\,\,\R^2,\\
   u(y)\leq u(0)=0, \,\,\text{for every}\,\,y\in\R^2.
 \end{cases}
\end{align*}
In particular, it is well-known (\cite{cl1}) that
\[
 V(0)\int_{\R^2}e^{\dis u}=8\pi.
\]
However,
\[
 \liminf\limits_{n\to+\infty}\int_{B_{\sigma_n}(y_n)}(\epsilon_n^2+|x|^2)^{\alpha_n}V_ne^{\dis v_n}\,dy=
 \liminf\limits_{n\to+\infty}\int_{B_{d_n}(0)}K_n(y)e^{\dis u_n}\,dy\geq V(0)\int_{\R^2}e^{\dis u}=8\pi.
\]
which is a contradiction to \eqref{uninBsigman}. This fact completes the study of Case (I).

\bigskip

Next, let us consider the Case (II) with $\alpha_\infty\in[0,1)$. Since $\frac{|y_n|}{s_n}\to +\ii$ then $s_n\leq|y_n|$ for $n$ large enough. Thus we have
\begin{equation}
 \label{epsilonyn}
 \frac{\epsilon_n}{|y_n|}\leq C,\,\,\forall\,n\in\N,
\end{equation}
for some $C>0$ and, up to a subsequence, $\tfrac{\epsilon_n}{|y_n|}\to\epsilon_0\geq0$. Recalling that $v_n$ satisfies \eqref{uninBsn}, let us define,
\begin{equation}
 \nonumber
  \bar w_n(y)=v_n(y_n+s_ny)+2\log s_n+2\alpha_{n}\log |y_n|\qquad\text{in}\,\,B_1(0),
\end{equation}
which satisfies
\begin{align*}
 \begin{cases}
  -\D \bar w_n=\bar W_n(y)e^{\dis \bar w_n}\qquad\text{in}\,\,B_1(0),\\
  \bar W_n(y)=(\tfrac{\epsilon_n^2}{|y_n|^2}+|\tfrac{y_n}{|y_n|}+\tfrac{s_n}{|y_n|}y|^2) ^{\alpha_n}V_n(y_n+s_n y)\to (1+\epsilon_0^2)^{\al_\ii}V(0)\qquad \text{in}\,\,C^0_{loc}(B_1(0)),\\
  \int_{B_1(0)}\bar W_n(y)e^{\dis \bar w_n}\,dy\leq 4\pi(1+\alpha_n)\tfrac{1+C_1}{C_1}<8\pi.
 \end{cases}
\end{align*}
Since $|y_n|>s_n$ for $n$ large enough, we have,
\[
 \bar w_n(0)=v_n(y_n)+2\log s_n+2\alpha_{n}\log |y_n|\geq v_n(y_n)+2(1+\alpha_n)\log s_n,
\]
which implies, by \eqref{blowupsequenceatsn}, that $0$ is a blow up point for $\bar w_n$ in $B_1(0)$. So we can apply the standard blow up analysis for Liouville equations (\cite{bm},\cite{ls}) to deduce that
\[
 \liminf\limits_{n\to+\infty}\int_{B_{s_n\rho}(y_n)}(\epsilon_n^2+|x|^2)^{\alpha_n}V_n e^{v_n}=
 \liminf\limits_{n\to+\infty}\int_{B_\rho(0)}\bar W_n(y)e^{\dis \bar w_n}\,dy\geq 8\pi,
\]
for $\rho\in(0,1)$ small enough, which is a contradiction.

\medskip

Next, let us consider Case (II) with $\alpha_\infty\in(-1,0)$.\\
Recall that \eqref{epsilonyn} holds true and then, up to a subsequence, we can assume that $\tfrac{\epsilon_n}{|y_n|}\to\epsilon_0\geq0$. We notice first that $v_n$ satisfies  \eqref{uninBsigman}, then we define the following quantities
\[
 \bar\delta_n:=e^{-\tfrac{v_n(y_n)}{2}}\to0^+\qquad\text{and}\qquad\bar \tau_n:=\tfrac{\bar\delta_n}{|y_n|^{\alpha_n}}=|y_n|^{|\alpha_n|}\bar\delta_n\to0^+,
\]
\[
 \bar d_n:=\tfrac{\sigma_n}{\bar\tau_n}=|y_n|^{-|\alpha_n|}e^{\frac{1}{2}(v_n(y_n)+2\log(\sg_n))}\to+\infty,
\]
as $n\to+\infty$. Since $\tfrac{|y_n|}{s_n}\to+\infty$, we have that
\[
 \tfrac{\tau_n}{|y_n|}=\tfrac{\bar\delta_n}{|y_n|^{1+\alpha_n}}=e^{-\frac{1}{2}(v_n(y_n)+2(1+\alpha_n)\log(|y_n|)}\leq e^{-\frac{1}{2}(v_n(y_n)+2(1+\alpha_n)\log(s_n))}\to0^+.
\]
as $n\to+\infty$. Thus we define
\[
 \bar v_n(y):=v_n(y_n+\bar\tau_n y)+2\log \bar\tau_n+2\alpha_n\log|y_n|=v_n(y_n+\bar\tau_n y)-v_n(y_n),\qquad y\in B_{\bar d_n}(0),
\]
which satisfies
\begin{align*}
 \begin{cases}
  -\D \bar v_n=\bar K_n(y)e^{\dis \bar v_n}\qquad\text{in}\,\,B_{\bar d_n}(0),\\
  \bar K_n(y)=(\tfrac{\epsilon_n^2}{|y_n|^2}+|\tfrac{y_n}{|y_n|}+\tfrac{\bar\tau_n}{|y_n|}y|^2)^{\alpha_n}V_n(y_n+\bar\tau_n y)\to
  (\epsilon_0^2+1)^{\alpha_\infty}V(0)\qquad \text{in}\,\,C^0_{loc}(B_{\bar d_n}(0)),\\
  \int_{B_{\bar d_n}(0)}\bar K_n(y)e^{\dis \bar v_n}\,dy\leq 4\pi\tfrac{1+C_1}{C_1}<8\pi, \\
  \bar v_n(y)\leq \bar v_n(0)=0.
 \end{cases}
\end{align*}
It is easy to see that, for every $R>0$,
\[
 \int_{B_R(0)} e^{\bar v_n}\leq C,
\]
then we can apply the concentration compactness alternative for Liouville equations (\cite{bm}) to deduce that $\bar v_n$, up to a subsequence, locally converges to $\bar v$ in $C^0_{loc}(\R^2)$, where $\bar v$ satisfies
\begin{align*}
 \begin{cases}
  -\D \bar v=(\epsilon_0^2+1)^{\alpha_\infty}V(0)e^{\dis \bar v}\qquad\text{in}\,\,\R^2,\\
  \bar v(y)\leq \bar v(0)=0, \,\,\text{for every}\,\,y\in\R^2.
 \end{cases}
\end{align*}
In particular, it is well-known (\cite{cl1}) that
\[
 (\epsilon_0^2+1)^{\alpha_\infty}V(0)\int_{\R^2}e^{\dis \bar v}=8\pi.
\]
Therefore we have,
\[
 \liminf\limits_{n\to+\infty}\int_{B_{\sigma_n}(y_n)}(\epsilon_n^2+|x|^2)^{\alpha_n}V_ne^{\dis v_n}\,dy=
 \liminf\limits_{n\to+\infty}\int_{B_{\bar d_n}(0)}\bar K_n(y)e^{\dis \bar v_n}\,dy\geq (\epsilon_0^2+1)^{\alpha_\infty}V(0)\int_{\R^2}e^{\dis \bar v}=8\pi,
\]
which is a contradiction to \eqref{uninBsigman}. This completes the study of Case (II).\\

At last we consider Case (III) with $\alpha\in(-1,1)$.\\
We notice first that,
\begin{align*}
 \tfrac{\epsilon_n}{s_n}=
 \begin{cases}
\tfrac{\epsilon_n}{t_n}, &\text{if}\,\,s_n\geq|y_n|\\
\tfrac{\epsilon_n}{t_n}\tfrac{|y_n|}{s_n} , &\text{if}\,\,s_n\leq|y_n|
\end{cases}\Bigg\}\leq \max\{C_1,C_1C_2\},
\end{align*}
for every $n\in\N$ and then, possibly along a subsequence, we can assume that,
\[
 \frac{\epsilon_n}{s_n}\to\epsilon_0,
\]
\[
 \xi_n=\frac{y_n}{s_n}\to\xi_0,
\]

for some $\epsilon_0\in[0,+\infty)$ and $\xi_0\in\R^2$. Recalling that $v_n$ satisfies \eqref{uninBsn}, we define
\[
 \widetilde w_n(y)=v_n(s_n y)+2(1+\alpha_n)\log s_n\qquad \text{in}\,\, B_{1}(\xi_n),
\]
which satisfies
\begin{align*}
 \begin{cases}
  -\D \widetilde w_n=\widetilde W_n(y)e^{\dis \widetilde w_n}\qquad\text{in}\,\,B_{\frac{1}{2}}(\xi_0),\\
  \widetilde W_n(y)=(\tfrac{\epsilon_n^2}{s_n^2}+|y|^2)^{\alpha_n}V_n(s_n y)\to (\epsilon_0^2+|y|^2)^{\alpha_n}V(0)
  \qquad\text{in}\,\,C^0_{loc}\left(B_{\frac{1}{2}}(\xi_0)\right),
  \\
  \int_{B_{\frac{1}{2}}(\xi_0)}\widetilde W_n(y)e^{\dis \widetilde w_n}\,dy\leq 4\pi(1+\alpha_n)\tfrac{1+C_1}{C_1}<\min\{8\pi,8\pi(1+\alpha_\ii)\}.
 \end{cases}
\end{align*}
 We notice that
 \[
  \widetilde w_n(\xi_n)= v_n(y_n)+2(1+\alpha_n)\log s_n,
 \]
which implies, by \eqref{blowupsequenceatsn}, that $\xi_0$ is a blow up point for $\widetilde w_n$ in $B_{\frac{1}{2}}(\xi_0)$, for $n$ large enough. \\
If either $\epsilon_0>0$ or $\xi_0\neq0$, we can apply the standard blow up analysis for Liouville equations (\cite{bm},\cite{ls}) to deduce that
\[
 \liminf\limits_{n\to +\ii}\int_{B_{s_n\rho}(y_n)}(\epsilon_n^2+|x|^2)^{\alpha_n}V_n e^{v_n}=
 \liminf\limits_{n\to +\ii}\int_{B_{\rho}(\xi_n)}\widetilde W_n(y)e^{\dis \widetilde w_n}\,dy\geq
\]
\[
 \int_{B_{\frac{\rho}{2}}(\xi_0)}\widetilde W_n(y)e^{\dis \widetilde w_n}\,dy\geq8\pi,
\]
for some $\rho\in(0,1)$ small enough, which is a contradiction. \\
On the other hand, if $|\xi_0|=\epsilon_0=0$, then we apply the Minimal Mass Lemma \ref{minimalmasslemma},
with $\widetilde \epsilon_n=\tfrac{\epsilon_n}{s_n}$, and we deduce once again that
\[
 \liminf\limits_{n\to +\ii}\int_{B_{s_n\rho}(y_n)}(\epsilon_n^2+|x|^2)^{\alpha_n}V_n e^{v_n}\geq\begin{cases}
            8\pi(1+\alpha_\infty), &\text{if}\,\,\al_\ii\in(-1,0), \\
            8\pi, &\text{if}\,\,\al_\ii\in[0,1),
            \end{cases}
\]
which is a contradiction.\\
This fact concludes the proof of Case (III).
\end{proof}

\bigskip

\section{The inequality \eqref{sup+inf.intro} is almost sharp.}\label{sec4}
It has ben proved in \cite{bcwyzOnsager}
that if $v_n$ is a sequence of solutions of \eqref{intro:eq} in $\om=B_1$, such that 
$$
\al_n=\frac{\lm_n}{4\pi}\sg,\quad\lm_n\to \lm_\ii\in \left(0,\frac{4\pi}{\sg}\right),\quad \al_\ii=\frac{\lm_\ii}{4\pi}\sg<1,\quad \sg\in \left(0,\frac12\right),
$$
which also satisfy, 
$$
\lm_n=\int\limits_{\om}\left({\epsilon_n^2+|x|^2}\right)^{\al_n} V_n e^{v_n},\quad \epsilon_n\to 0^+,
$$
$$
V_n\geq 0, \,\, V_n\to V \text{ uniformly in }\overline B_1\,\,\mbox{and in } C^{1}_{\rm loc}(B_1),
$$
and
$$
  \underset{\p B_1}\max\,v_n-\underset{\p B_1}\min\,v_n\leq C,
$$ 
and that $x=0$ is the unique blow up point for $v_n$ in $B_1$, that is,
\[
 \text{for any}\,\,r\in(0,1),\exists \, C_r>0, \,\,\text{such that}:
\]
\begin{equation*}
    \label{bounded}
\underset{\overline{B_1\backslash B_r}}{\max}\, v_n\leq C_r,
\end{equation*}
\begin{equation*}
    \label{explosion}
\underset{\overline{B_r}}\max\, v_n\to\infty,
\end{equation*}
then $V(0)>0$ and a new kind of blow up phenomenon takes place. We called it ``blow-up and concentration without quantization", where the lack of compactness of solutions comes with a concentration phenomenon but, unlike the classical regular (\cite{yy}) and singular cases (\cite{bt}), the corresponding mass is not quantized, being free to take values in the full interval $\lm_\ii\in (8\pi,\min\{\frac{8\pi}{1-2\sg},\tfrac{4\pi}{\sg}\})$. More exactly (see Theorem 3.3 in \cite{bcwyzOnsager}), we proved that there are only three possibilities, corresponding in fact to the three situations already discussed in the minimal mass Lemma above, in which cases we obtain the following global profiles. 
Let $r\leq \frac12$,
$$
v_n(x_n)=\underset{\overline{B_r}}
\max\, v_n\to+\infty, \quad |x_n|\to 0^+, 
$$
$$
\delta_n^{2(1+\al_n)}=e^{\dis -v_n(x_n)}\to 0^+,
$$
and 
\[
 t_n=\max\{\delta_n,|x_n|\}\to0^+.
\]
Then, either,
\begin{itemize}
 \item \mbox{\rm (I):}  there exists a subsequence such that $\tfrac{\epsilon_n}{t_n}\to+\infty$,
\end{itemize}
in which case we have $\lm_\ii=8\pi$ and
\begin{equation}\label{profilev:H}
v_n(x)=\log\left(\dfrac{e^{\dis v_n(x_n)}}
{\left(1+\gamma_n\theta_n^{2(1+\bns)}\epsilon_n^{-\frac{\lm_n}{4\pi}}|x-{x_n}|^{\frac{\lm_n}{4\pi}}\right)^2}\right)+O(1),\quad z\in B_{r}(0);
\end{equation}
where
$\theta_n^{2(1+\al_{n})}=
\left(\tfrac{\epsilon_n}{\delta_n}\right)^{2(1+\al_{n})}\to +\ii$, $8 \gamma_n={(1+|\frac{x_n}{\epsilon_n}|^2)^{\bns} V_n({x_n})}$, or
\begin{itemize}
 \item \mbox{\rm (II):} there exists a subsequence such that $\tfrac{\epsilon_n}{t_n}\leq C$ and $\tfrac{|x_n|}{\delta_n}\to+\infty$,
\end{itemize}
in which case we have $\lm_\ii=8\pi$ and
\begin{equation}\label{profilev:H1}
v_n(x)=\log\left(\dfrac{e^{\dis v_n(x_n)}}
{\left(1+\bar\gamma_n\bar \theta_n^{2(1+\al_{n})}|x_n|^{-\frac{\lm_n}{4\pi}} |x-{x_n}|^{\frac{\lm_n}{4\pi}}\right)^2}\right)+O(1),\quad z\in B_{r}(0),
\end{equation}
where $\bar \theta_n^{2(1+\al_{n})}=\left(\tfrac{|x_{n}|}{\delta_n}\right)^{2(1+\al_{n})}\to +\ii$,
$8\bar\gamma_n={((\tfrac{\epsilon_n}{|x_n|})^2+1)^{\bns}}V_n(x_n)$, or
\begin{itemize}
 \item \mbox{\rm (III):} there exists a subsequence such that $\tfrac{\epsilon_n}{t_n}\leq C$ and
 $\tfrac{|x_n|}{\delta_n}\leq C$,
\end{itemize}
in which case, along a further subsequence if necessary, we have 
$$
\tfrac{\epsilon_n}{\delta_n}\to \epsilon_0\geq 0, \quad\tfrac{x_n}{\delta_n}\to y_0\in\R^2, $$ 
and $\lm_\ii\in (8\pi,\tfrac{8\pi}{1-2\sg}]$, if $\sigma\in(0,\tfrac{1}{4})$, or $\lm_\ii\in(8\pi,\tfrac{4\pi}{\sigma})$, if $\sg\in[\tfrac{1}{4},\tfrac12)$ and
\begin{equation}\label{profilevtilde:H1-IIb}
 v_n(x)=v_n(x_n)+ \widetilde U_n(\delta_n^{-1}x)+O(1), \quad x\in B_{r}(0),
\end{equation}

where
\begin{equation*}
    \widetilde U_n(y)=\graf{\widetilde w(y)+O(1),\quad |y|\leq R \\ -\frac{\lm_n}{2\pi}\log(|y|) + O(1),\quad R\leq  |y|\leq r\delta_n^{-1}}
\end{equation*}
and $\widetilde w$ is the unique solution of 
\begin{align*}
\begin{cases}\label{profile:tildew}
 -\D \widetilde w=\widetilde V(y) e^{\dis \widetilde w} \quad\text{in}\quad \R^2 \\
\widetilde V(y)=V(0)\left(\epsilon_0^2+|y|^2\right)^{\al_\ii},\\
\lm_\ii = \int_{\R^2}\widetilde V e^{\dis \widetilde w},\\
\widetilde w(y)\leq \widetilde w(y_0)=0.
\end{cases}
\end{align*}

It is still under investigation whether or not solutions of \eqref{intro:eq} satisfying either \eqref{profilev:H} or \eqref{profilev:H1} really exist. On the other hand, it is reasonable to think that, possibly under general assumptions, one can prove the existence of solutions satisfying \eqref{profilevtilde:H1-IIb}. 

\smallskip

However, it is interesting to test the validity of the ``$\sup+C\inf$" inequality \eqref{intro:sup+Cinf} for these kind of approximated solutions.

Concerning the sequences in (I), let us consider any $\mu=\tfrac{1}{C_1}<\frac{1-\al_\ii}{1+\al_\ii}$, then we have that
\begin{align*}
\mu\, v_n(x_n)+\inf\limits_{\pa B_1}v_n&=O(1)+\log(\delta_n^{-2(1+\bns)\mu})+\log\left(\tfrac{\delta_n^{-2(1+\bns)}}{\theta_n^{4(1+\bns)}\epsilon_n^{-\frac{\lm_n}{2\pi}}}\right)\\
&=O(1)+\log\left(\theta_n^{2(1+\bns)(\mu-1)}\epsilon_n^{\frac{\lm_n}{2\pi}-2(1+\bns)(1+\mu)}\right).
\end{align*}
We notice that $\mu<1$, therefore $2(1+\bns)(\mu-1)<0$, for every $n\in\N$, and, by using the fact that $\lm_n=8\pi+o(1)$,
\begin{align*}
\frac{\lm_n}{2\pi}-2(1+\bns)(1+\mu)&=-2(1+\bns)\left(\mu-\tfrac{\frac{\lm_n}{2\pi}-2(1+\bns)}{2(1+\bns)}\right)=-2(1+\bns)\left(\mu-\tfrac{1-\bns}{1+\bns}+o(1)\right)\geq0,
\end{align*}
for any $n$ large enough. Thus,
\begin{align*}
\mu\, &v_n(x_n)+\inf\limits_{\pa B_1}v_n=\\
&=O(1)+2(1+\bns)(\mu-1)\log(\theta_n)-2(1+\bns)\left(\mu-\tfrac{1-\bns}{1+\bns}+o(1)\right)\log\epsilon_n\to-\infty.
\end{align*}
On the other hand, by taking $C_1=\mu=1$, we have that 
$$
v_n(x_n)+\inf\limits_{\pa B_1}v_n=-2(1+\bns)\left(1-\tfrac{1-\bns}{1+\bns}+o(1)\right)\log(\epsilon_n)=(-4\al_n+o(1))\log(\epsilon_n)\to +\ii.
$$
The sequences in (II) share the same behavior, therefore we skip the details. \\
For the class of solutions (I) and (II), it is not clear what happens by peaking $C_1\in\left(1,\frac{1+\al_\ii}{1-\al_\ii}\right]$, while for sure they fail to satisfy \eqref{sup+inf.intro} with $C_1=1$.

\medskip

At last, concerning the solutions in (III), by recalling that $\al_\ii=\frac{\lm_\ii}{4\pi}\sg<1$, we show that, if $C_1\geq\frac{1+\al_\ii}{1-\al_\ii}$, then 
\begin{equation}\label{sup+infholds}
    \sup\limits_{B_r} v_n+C_1\inf\limits_{B_1}v_n=
v_n(x_n)+C_1\inf\limits_{\pa B_1}v_n\to -\ii.
\end{equation}
Thus, putting $\frac{1}{C_1}=\tau\frac{1-\al_\ii}{1+\al_\ii}$ for some $\tau\leq1$, and peaking $\lm_n=8\pi \mu_n$ for some $\mu_n\to \mu>1$, we have, 
\begin{align*}
\frac{1}{C_1} v_n(x_n)+\inf\limits_{\pa B_1}v_n&=\frac{2(1+\al_n)}{C_1}\log(\dt_n^{-1})+2(1+\al_n)\log(\dt_n^{-1})-\frac{\lm_n}{2\pi}\log(\dt_n^{-1})+O(1)\\
&=2(1+\al_n)\left(\frac{1}{C_1}+1-\frac{\lm_n}{4\pi(1+\al_n)}\right)\log(\dt_n^{-1})+O(1)\\
&=2(1+\al_n)\left(\tau\frac{1-\al_\ii}{1+\al_\ii}+1-\frac{2\mu_n}{1+\al_n}\right)\log(\dt_n^{-1})+O(1)\\
&=2(1+\al_n)\left(\tau\frac{1-\al_\ii}{1+\al_\ii}+1-
\frac{2\mu}{1+\al_\ii}(1+o(1))\right)\log(\dt_n^{-1})+O(1).
\end{align*}
Now, since $\tau\leq1$ and $\mu>1$, we notice that
\begin{align*}
\tau\frac{1-\al_\ii}{1+\al_\ii}+1-
\frac{2\mu}{1+\al_\ii}(1+o(1))&\leq \frac{1-\al_\ii}{1+\al_\ii}+1-
\frac{2\mu}{1+\al_\ii}(1+o(1))\\
&=
\frac{2-2\mu(1+o(1))}{1+\al_\ii}\leq0,
\end{align*}
therefore \eqref{sup+infholds} easily follows.

On the other hand, we show that, if $C_1<\frac{1+\al_\ii}{1-\al_\ii}$, then 
\begin{equation}\label{nosup+inf}
    \sup\limits_{B_r} v_n+C_1\inf\limits_{B_1}v_n=
v_n(x_n)+C_1\inf\limits_{\pa B_1}v_n\to +\ii.
\end{equation}

In fact, remark that in (III) we are allowed to take in principle any value of $\lm_\ii$ as close as we wish to $8\pi^+$, see \cite{Lin1}. Thus, putting $\frac{1}{C_1}=\tau\frac{1-\al_\ii}{1+\al_\ii}$ for some $\tau >1$, and peaking $\lm_n=8\pi \mu_n$ for some $\mu_n\to \mu>1$ to be fixed later on, we have, 
\begin{align*}
\frac{1}{C_1} v_n(x_n)+\inf\limits_{\pa B_1}v_n&=2(1+\al_n)\left(\tau\frac{1-\al_\ii}{1+\al_\ii}+1-\frac{2\mu_n}{1+\al_n}\right)\log(\dt_n^{-1})+O(1)\\
&=2(1+\al_n)\left(\tau\frac{1-\al_\ii}{1+\al_\ii}-
\frac{2\mu-1-\al_\ii}{1+\al_\ii}(1+o(1))\right)\log(\dt_n^{-1})+O(1)
\end{align*}
and \eqref{nosup+inf} follows since, as $\mu\to 1^+$, the quantity in parenthesis converges to 
$$
\frac{1-\al_\ii}{1+\al_\ii} (\tau-1 +o(1)).
$$ 

\bigskip

\section{Appendix: Known results}
We need the following well known result, see \cite{Lin1} and \cite{llty}.
\begin{lemma}\label{massstrange}
 Let $\psi$ be a solution of the following problem
 \begin{equation*}
 \begin{cases}
 -\D \psi(x)= (\epsilon_0^2+|x|^2)^{\alpha}e^{\dis \psi}\quad\text{in}\,\, \R^2, \\
 \underset{\R^2}\int(\epsilon_0^2+|x|^2)^{\alpha}e^{\dis \psi}<+\infty,
 \end{cases}
\end{equation*}
for some $\epsilon_0\geq0$ and $\alpha>-1$. Let us denote by
\begin{equation*}
 \beta=\tfrac{1}{2\pi}\underset{\R^2}\int(\epsilon_0^2+|x|^2)^{\alpha}e^{\dis \psi}.
\end{equation*}
Then:
\begin{itemize}
 \item If $-1<\alpha< 0$, we have,
 \begin{equation*}
 \label{massnegative}
  8\pi(1+\alpha)\leq \underset{\R^2}\int(\epsilon_0^2+|x|^2)^{\alpha}e^{\dis \psi}<8\pi,
 \end{equation*}
 where the l.h.s equality holds true if and only if $\epsilon_0=0$;
\item If $\alpha\geq 0$, then
 \begin{equation*}
 \label{masspositive}
\max\{8\pi,4\pi(1+\alpha)\}\leq \underset{\R^2}\int(\epsilon_0^2+|x|^2)^{\alpha}e^{\dis \psi}\leq  8\pi(1+\alpha),
 \end{equation*}
 where the l.h.s. equality holds true if and only if $\alpha=0$ while the right hand side equality holds true if and only if $\epsilon_0=0$.
\end{itemize}
\end{lemma}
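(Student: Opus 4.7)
The plan is to combine an asymptotic expansion of $\psi$ at infinity with a Pohozaev identity, and then to refine the resulting bounds via an auxiliary Kelvin-type inversion.

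First I would establish the decay of $\psi$ at infinity. Since $(\epsilon_0^2+|x|^2)^\alpha e^\psi\in L^1(\R^2)$ and $-\Delta\psi\ge 0$, standard representation-formula arguments in the Brezis--Merle spirit yield, with $M:=\int_{\R^2}(\epsilon_0^2+|x|^2)^\alpha e^\psi$ and $\beta:=M/(2\pi)$,
\[
\psi(x)=-\beta\log|x|+c+o(1),\qquad \nabla\psi(x)=-\beta\,\tfrac{x}{|x|^2}+o(|x|^{-1}),\qquad |x|\to\infty.
\]
In particular $(\epsilon_0^2+|x|^2)^\alpha e^\psi\sim|x|^{2\alpha-\beta}$ at infinity, so integrability forces $\beta>2(1+\alpha)$, i.e.\ $M>4\pi(1+\alpha)$. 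This already gives the weaker of the two lower bounds in the statement.

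Next I would derive a Pohozaev identity by multiplying the equation by $x\cdot\nabla\psi$ and integrating on $B_R$. Using the divergence-form manipulation
\[
(\epsilon_0^2+|x|^2)^\alpha e^\psi(x\cdot\nabla\psi)=\operatorname{div}\!\left(x(\epsilon_0^2+|x|^2)^\alpha e^\psi\right)-2(\epsilon_0^2+|x|^2)^\alpha e^\psi-2\alpha|x|^2(\epsilon_0^2+|x|^2)^{\alpha-1}e^\psi,
\]
together with the classical Pohozaev manipulation of $\int_{B_R}(-\Delta\psi)(x\cdot\nabla\psi)$, letting $R\to\infty$ with the expansion above (the right-hand boundary terms vanish because $2\alpha-\beta<-2$, the left-hand ones converge to $-\pi\beta^2$), and finally substituting $|x|^2=(\epsilon_0^2+|x|^2)-\epsilon_0^2$ in the remaining volume integral, I obtain the master identity
\[
M\bigl(M-8\pi(1+\alpha)\bigr)=-8\pi\alpha\,\epsilon_0^2\,I,\qquad I:=\int_{\R^2}(\epsilon_0^2+|x|^2)^{\alpha-1}e^\psi\ge 0,
\]
with $I>0$ whenever $\epsilon_0>0$. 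A direct sign analysis yields: if $\alpha<0$ then $M\ge 8\pi(1+\alpha)$ with equality iff $\epsilon_0=0$; if $\alpha=0$ then $M=8\pi$; if $\alpha>0$ then $M\le 8\pi(1+\alpha)$ with equality iff $\epsilon_0=0$.

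The remaining bounds --- the strict estimate $M<8\pi$ when $\alpha\in(-1,0)$ and the threshold $M\ge 8\pi$ when $\alpha>0$, both in the perturbed regime $\epsilon_0>0$ --- lie beyond what Pohozaev alone can give. To obtain them I would exploit the Kelvin inversion
\[
\hat\psi(y):=\psi\!\left(\tfrac{y}{|y|^2}\right)-(4+2\alpha)\log|y|+2\alpha\log\epsilon_0,
\]
which satisfies $-\Delta\hat\psi=(\tilde\epsilon_0^2+|y|^2)^\alpha e^{\hat\psi}$ on $\R^2\setminus\{0\}$ with $\tilde\epsilon_0=1/\epsilon_0$, carries the same total mass $M$, and displays a logarithmic singularity at the origin of coefficient $C_0=M/(2\pi)-4-2\alpha$. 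Running the same Pohozaev analysis for $\hat\psi$ on the annulus $B_R\setminus B_\rho$ and sending $\rho\to 0^+$, $R\to\infty$, produces an additional boundary contribution at the singular point whose sign and magnitude are governed by $C_0$ and which yields the complementary inequality when combined with the master identity above. The main obstacle is precisely the careful control of this singular boundary term together with the distributional integrability requirement at the origin: it is this interplay that encodes the critical threshold $8\pi$ and drives both the strict inequality $M<8\pi$ for $\alpha<0$ and the lower bound $M\ge 8\pi$ for $\alpha>0$, in the spirit of the classification work carried out in \cite{Lin1} and \cite{llty}.
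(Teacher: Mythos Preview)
The paper does not prove this lemma: it is stated in the Appendix as a ``well known result'' and is simply referred to \cite{Lin1} and \cite{llty}. So there is no paper proof to compare against; what matters is whether your reconstruction is sound.

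Your Steps~1 and~2 are correct and standard. I checked the master identity
\[
M\bigl(M-8\pi(1+\alpha)\bigr)=-8\pi\alpha\,\epsilon_0^{2}\int_{\R^2}(\epsilon_0^{2}+|x|^{2})^{\alpha-1}e^{\psi},
\]
and the sign discussion that follows it: this already gives the $8\pi(1+\alpha)$ thresholds together with their equality cases, as well as $M>4\pi(1+\alpha)$ from integrability.

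Your Step~3 is only sketched, but it can indeed be made to work and yields more than you indicate. If one runs the \emph{same} Pohozaev computation for $\hat\psi$ on the annulus $B_R\setminus B_\rho$ and takes the limits using the asymptotics $\hat\psi\sim C_0\log|y|$ at the origin ($C_0=\beta-4-2\alpha$) and $\hat\psi\sim-(4+2\alpha)\log|y|$ at infinity, the two boundary contributions combine to $\pi C_0^{2}-\pi(4+2\alpha)^{2}$, while the volume terms are handled exactly as before. After simplification one obtains the clean complementary identity
\[
M\bigl(M-8\pi\bigr)=8\pi\alpha\,\epsilon_0^{-2}\int_{\R^2}\bigl(\epsilon_0^{-2}+|y|^{2}\bigr)^{\alpha-1}e^{\hat\psi}\qquad(\epsilon_0>0),
\]
which immediately gives $M<8\pi$ for $\alpha\in(-1,0)$ and $M>8\pi$ for $\alpha>0$, both strictly. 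The case $\epsilon_0=0$ is the classical conical classification and gives $M=8\pi(1+\alpha)$ directly. So your outline is correct; it would strengthen the write-up to display this second identity explicitly rather than leaving the conclusion as a qualitative ``sign and magnitude'' argument. The only technical point deserving a sentence of justification is that the cross term $C_0\int_{\partial B_\rho}\partial_\nu(\hat\psi-C_0\log|y|)$ in the inner boundary contribution tends to zero; this follows from the flux identity $\int_{\partial B_\rho}\partial_\nu\hat\psi=2\pi C_0-\int_{B_\rho}(\epsilon_0^{-2}+|y|^2)^\alpha e^{\hat\psi}$ and the local integrability $C_0>-2$ already noted.
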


\section*{Acknowledgements}

Daniele Bartolucci and Paolo Cosentino were partially supported by the MIUR Excellence Department Project MatMod@TOV awarded to the Department of Mathematics, University of Rome ``Tor Vergata'', CUP E83C23000330006, by PRIN project 2022 2022AKNSE4, ERC PE1\_11, ``{\em Variational and Analytical aspects of Geometric PDEs}'', by the E.P.G.P. Project sponsored by the University of Rome ``Tor Vergata'', E83C25000550005, and by INdAM-GNAMPA Project, CUP E53C25002010001. They are members of the INDAM Research Group ``Gruppo Nazionale per l’Analisi Matematica, la Probabilit\`a e le loro Applicazioni".\\
Lina Wu was partially supported by the National Natural Science Foundation of China (12201030).

\maketitle

\end{document}